\date{}
\begin{document} 

\title{ Reinforcement Learning-Based Optimal Control for Multiplicative-Noise Systems with Input Delay}
\author{Hongxia Wang, Fuyu Zhao, Zhaorong Zhang, Juanjuan Xu, Xun Li}
\author{Hongxia~Wang,~\IEEEmembership{}
       Fuyu~Zhao,~\IEEEmembership{}
        Zhaorong~Zhang,~\IEEEmembership{}
Juanjuan~Xu~\IEEEmembership{}
         and~Xun ~Li~\IEEEmembership {}
\thanks{Hongxia Wang and Fuyu Zhao are with the School
of Electrical and Automation Engineering, Shandong University of Science and Technology, Qingdao 30332, China, (e-mail: whx1123@126.com; 503171379@qq.com).}
\thanks{Zhaorong Zhang and Xun Li are with the Department of Applied Mathematics, The Hong Kong Polytechnic University
Hong Kong, China (e-mail: zhaorong.zhang@polyu.edu.hk; li.xun@polyu.edu.hk).}
\thanks{Juanjuan Xu is with Shandong University, Jinan 250061, China, (e-mail: juanjuanxu@sdu.edu.cn).}
}
\date{}
\maketitle
\newcommand{\RNum}[1]{\uppercase\expandafter{\romannumeral #1\relax}}
\newtheorem{corollary}{Corollary}
\newtheorem{assumption}{Assumption}
\newtheorem{remark}{Remark}
\newtheorem{lemma}{Lemma}
\newtheorem{theorem}{Theorem}
\newtheorem{definition}{Definition}
\newtheorem{problem}{Problem}
\newcommand{\no}{\nonumber}
\begin{abstract}
In this paper, the reinforcement learning (RL)-based optimal control problem is studied for multiplicative-noise systems, where input delay is involved and partial system dynamics is unknown. To solve a variant of Riccati-ZXL equations, which is a counterpart of standard Riccati equation and determines the optimal controller, we first develop a necessary and sufficient stabilizing condition in form of several Lyapunov-type equations, a parallelism of the classical Lyapunov theory. Based on the condition, we provide an offline and convergent algorithm for the variant of Riccati-ZXL equations. According to the convergent algorithm, we propose a RL-based optimal control design approach for solving linear quadratic regulation problem with partially unknown system dynamics. Finally, a numerical example is used to evaluate the proposed algorithm.

\begin{IEEEkeywords}
 stochastic system, linear quadratic regulation, input delay, reinforcement learning
\end{IEEEkeywords}

\end{abstract}
\section{Introduction}
The control based on reinforcement learning \cite{suttonmitpress1998} has received  paramount  attention because of its successful applications in games and simulators \cite{Silver2018science, Mnih2015nature}.  An increasing research effort is made on various RL algorithms for complex dynamical systems. The linear quadratic regulation (LQR) problem has reemerged as an important theoretical benchmark for RL-based control of complex systems with continuous-time state and action spaces.

Among RL-based control design for the LQR problem,  most work is for deterministic or additive noise systems, see  \cite{modarestac2014,bahareauto2014, bianauto2014, bianauto2016,lewis2011smc,jiang2018auto } and references  therein.  Multiplicative noise system explicitly incorporates model uncertainty and inherent stochasticity, and  is of benefit to robustness improvement of the controller.  Thus, there has also emerged some research for multiplicative noise systems \cite{jiang2011nn, xuauto2012, biantac2016,leongauto2020,wangneuro2016, gravell2021tac,coppens2020ldc,litac2022}. 

It should be stressed that time delay is seldom considered in RL-based control of the LQR problem for multiplicative noise systems  even though the model-based control design  for time delay systems has ever been fully investigated \cite{zhang2015dis}. Several RL algorithms are developed for solving optimal control problems of deterministic systems in presence of time delay \cite{zhang2011tnn, song2010neur, wei2010acta, zhang2014neur}.  Within the radius of our knowledge, it seems hard to generalize them to deal with LQR problem for multiplicative noise systems because these algorithms are problem-oriented. \cite{song2010neur} considers a particular nonlinear performance index, which does not include quadratic form index of the LQR  problem as a special case. A quasi-linear relation of the control input is assumed in \cite{wei2010acta},  and \cite{zhang2014neur} requires that  the underlying system can be converted into another delay-free system with the same dimension equivalently,  which seems to be somewhat strict for a general multiplicative-noise system. 
Two Q-learning techniques are proposed  for network control system with random delay and input-dependent noise, where the state augmentation is adopted and the original system is converted into a delay-free and high-dimensional system \cite{xuauto2012}. Given that the state space expansion may cause a large increase in learning time and memory requirements \cite{eric2010irs},  meanwhile,  the selection of exploration noise is not a trivial work for general RL problems, especially  for high-dimensional systems \cite{jiang2018auto}, a direct RL-based control design (avoiding augmentation) is provided for the optimal control involving input delay and input-dependent noise \cite{wangauto2022}. The design heavily depends on the special structure of systems. Therefore, there lacks RL-based control design for solving the general optimal control of systems with time delay and multiplicative noise. 


The problem is very involved even though the system dynamics is completely known. As shown in \cite{zhang2015dis}, different from the delay-free case,  the solvability condition and optimal controller of the problem are determined by Riccati-ZXL equations below,
\begin{align}
Z=&A'ZA+\bar A'X\bar A+Q-M'\Upsilon^{-1}M,\label{zequ}\\
X=&Z+\sum_{i=0}^{d-1}(A')^{i}M'\Upsilon^{-1}MA^i\label{xequ}
\end{align}
with
\begin{align}
\Upsilon=&R+B'XB+\bar B'Z\bar B,\\
M=&B'XA+\bar B'Z\bar A.
\end{align}
where $Z$ and $X$ are unknown matrices, and other matrices are known. Note that Riccati-ZXL equations or their variants in \cite{zhang2015dis} are not only nonlinear in $Z$ and $X$ but also coupled with each other. It is thus hard to attain the optimal control by solving them. Also, it is difficult to develop good parallel versions of the Newton's iterative method for solving Riccati-ZXL equations when there lacks a necessary and sufficient stabilizing condition for the multiplicative noise systems with input delay. More precisely, to obtain an approximate solution of the variants of Riccati-ZXL equations, it is necessary to develop a necessary and sufficient stabilizing condition similar to the classical Lyapunov theorem.

The goal of this paper is to approximately solve
optimal control  for general systems with input delay and multiplicative noise. The contribution of
this paper is multifold.  Firstly, we find a necessary and sufficient stabilizing condition of the general multiplicative noise systems with input delay. The condition generalizes the classical Lyapunov theorem and characterizes all predictor-feedback controllers. Secondly, we provide the recursively approximate solutions to the variant of  Riccati-ZXL equations and prove their convergence. Thirdly, we propose a novel RL method for  optimal control with input delay in stochastic setting. 

The remainder of the paper is organized as follows. Section II is devoted to deriving the necessary and sufficient stabilizing condition for the predictor-feedback.  As a application,  Section III gives two algorithms for solving the LQR for input-delay multiplicative-noise systems.  Numerical example is performed in Section IV.  Some conclusions are made in Section V. 

Notation: ${\cal R}^n$ stands for the $n$ dimensional Euclidean space; $I$ denotes the unit matrix; The superscript $ '$ represents the matrix transpose; For matrix $M$,  $M > 0$ (reps. $\ge 0$) means that it is positive definite (reps. positive semi-definite),  $M^i$ and  $M^{(i)}$ stand for a matrix with supscript $i$ and the power of matrix $M$;   For all matrices $A$ and $B$, $\mathrm{diag}\{A,B\}$ represents a block diagonal matrix with diagonal blocks $A$ and $B$. 
For matrix $D=(d_{ij})\in \mathbb{R}^{n \times m}$ and vector $x \in \mathbb{R}^{n}$, $||x||_D\dot =x'Dx$;
$\mathrm{vec}(D)=[d_{11}, \cdots, d_{1m}, d_{21}, d_{22},\cdots, d_{nm-1},d_{nm}]'$,
$ \underline{\mathrm{vec}}(D)=[d_{11}, \cdots, d_{1m}, d_{22}, d_{23},\cdots, d_{n-1m}, d_{mm}]'$,
$\mathrm{mat}(x)=xx'$;
$(\Omega,  {\cal F}, \{{\cal F}_k\}_{k\ge 0}, {\cal P})$ denotes a complete probability space.  $\{w_k\}_{k\ge 0}$, defined on this space,  is a white noise scalar  valued  sequence with zero mean and satisfies
${\mathsf E}[w_kw_s]=\delta_{ks}$, where $\delta_{ks}$ is the Kronecker function.  $\Omega$ is the sample space, ${\cal F}$ is a $\sigma$-field, $ \{{\cal F}_k\}_{k\ge 0}$ is the natural filtration generated by $\{w_k\}_{k\ge 0}$, and ${\cal P}$ is a probability measure \cite{yong1999stochastic} ; $x_k|_m={\mathsf E}[x_k| {\cal F}_m]$ denotes the conditional expectation of $x_k$ with respect to ${\cal F}_m$ and $x_k|_m^l=x_k|_l-x_k|_m$. A  stochastic process $X(w, k)$ is said to be ${\cal F}_k$-measurable if the map $w \rightarrow X(w, k)$ is measurable. Hence, $x_k|_m$ is ${\cal F}_m$-measurable\cite{yong1999stochastic}. 

\section{Problem statement and preliminaries}

\subsection{Problem Statement}
Consider the multiplicative-noise system below
\begin{align}
x_{k+1}=A_kx_k+B_ku_{k-d},\label{sys}
\end{align}
where $x_k \in {\cal R}^n$ is the system state, $u_k \in {\cal R}^m$ is the control input, $d$ is a positive integer and stands for the length of  time delay,  $\{w_k\}$ is a scalar white-noise process with zero mean and ${\mathsf E}[w_k'w_s]=\delta_{ks}$, and $\delta_{ks}$ is a Kronecker operator, $A_k=A+w_k\bar A$, $B_k=B+w_k\bar B$, $A$ and $ B$ are given constant matrices, and $\bar A$ and $\bar B$ are unknown constant matrices.

\begin{remark}
In system \eqref{sys}, $w_k(\bar Ax_k+\bar B_ku_{k-d})$ is used to represent the lumped disturbance of physical system, possibly including parameter variations and unmodeled inherent stochasity. Hence, it is hard to obtain exact $\bar A$ and $\bar B$ in practice.
\end{remark}

The performance index to be optimized  is given as 
\begin{align}
&J\dot={\mathsf E}\sum_{k=0}^\infty (x_k'Qx_k+u_{k-d}'Ru_{k-d}), \label{costfunj}
\end{align}
where $Q \ge 0$, $R>0$ and $(A, \bar A|Q^{1/2})$ is exactly observable. To guarantee well-posedness of the infinite-horizon control problem, the admissible controller are restricted to be  mean-square stabilizing and ${\cal F}_{k-d-1}$-measurable.  

We are interested in finding  a predictor-feedback controller $u_{k-d}$ which stabilizes system \eqref{sys} in mean-square sense and minimizes  $J$ in \eqref{costfunj}. 

The definitions of the stabilizability  under predictor-feedback controller and exact observability are put forward in the following. 
 \begin{definition}
System \eqref{sys} is said to be stabilizable if there exists a predictor-feedback controller  $u_{k-d}=-K$$x_{k|k-d-1}$, such that for any initial data $x_0, u_{-d},\cdots, u_{-1}$, the closed-loop system
\begin{align}
x_{k+1}=A_kx_k-B_kKx_{k|k-d-1} \label{ksys}
\end{align}
is asymptotically mean-square stable, that is, $\lim_{k\rightarrow +\infty}{\mathsf E}[x_k'x_k]=0$, where $K$ is a constant matrix. In this case, we also say that $K$ is stabilizing for short. 
\end{definition}



%
\begin{definition}
The multiplicative-noise system 
\begin{align}
x_{k+1}=f(x_k,w_k), 
y_k=Q^{1/2}x_k
\end{align}
is said to be exactly observable if for any $N \ge j$, 
\begin{align}
y_k\equiv 0, a.s. \forall j \le k \le N \Rightarrow x_j=0.
\end{align}
In particular,  if  both systems
\begin{align}
x_{k+1}=A_kx_k+B_ku_k, 
y_k=Q^{1/2}x_k
\end{align}
and 
\begin{align}
x_{k+1}=A_kx_k-B_kKx_k|_{k-d-1}, 
y_k=Q^{1/2}x_k
\end{align}
are exactly observable, it is also said that $(A,\bar A | Q^{1/2})$ and $(A-BK,\bar A-\bar B K |Q^{1/2})$  are exactly observable for short, respectively.
\end{definition}
\subsection{Optimal Solution of Multiplicaitve-Noise LQR with Input Delay and Exactly Known System Dynamics}
In the case that $A, B, \bar A$ and $\bar B$ are exactly known,  the analytic solution of $\min_u J$  subject to \eqref{sys} has been provided in  \cite[Th. 3]{zhang2015dis}, from which our control policy will be developed.  For ease of reading,  we  restate  \cite[Th. 3]{zhang2015dis} as a lemma.
\begin{lemma}\label{lem1}
Suppose that $(A, \bar A, Q^{1/2})$ is exactly observable. The problem  $\min_u J$ subject to \eqref{sys}  is uniquely solvable if and only if the coupled equations  below
\begin{align}
&\mathbf P^1=A'\mathbf P^1A+A'\mathbf P^dA+Q, \label{cmp1}
\\&\mathbf P^2=- M'\Upsilon^{-1} M,\label{cmp2}\\
&\mathbf P^{i}=A'\mathbf P^{i-1}A, i=3,\cdots, d+1,\label{cmpi}\\
&\Upsilon=R+\sum_{i=1}^{d+1}B'\mathbf P^{i}B+\bar{B}'\mathbf P^1\bar{B}>0, \label{upsi}\\
&M=\sum_{i=1}^{d+1}B'\mathbf P^{i}A+\bar B'\mathbf P^{1}\bar A \label{mm}
\end{align}
have a unique solution such that $\sum_{i=1}^{d+1} \mathbf P^i>0$. Moreover, for $k\ge d$,  the stabilzing and optimal controller is given by 
$u_{k-d}=-\Upsilon^{-1}Mx_k|_{k-d-1}$, and the optimal value function is $V_k={\mathsf E}[x_k'(\mathbf P^1x_k+\sum_{i=2}^{d+1}\mathbf P^{i}x_k|_{k-d+i-3})]$.
\end{lemma}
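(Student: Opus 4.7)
The plan is to split the proof into sufficiency and necessity, with sufficiency handled by a completing-the-squares argument and necessity obtained by finite-horizon approximation plus a limiting argument using exact observability.

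For the sufficiency direction, I would take the candidate value function $V_k = {\mathsf E}[x_k'(\mathbf P^1 x_k + \sum_{i=2}^{d+1}\mathbf P^{i} x_k|_{k-d+i-3})]$ directly from the statement and try to show the one-step identity
\begin{align*}
{\mathsf E}[x_k'Qx_k + u_{k-d}'Ru_{k-d}] + V_{k+1} - V_k = {\mathsf E}\bigl\|u_{k-d} + \Upsilon^{-1}M\,x_k|_{k-d-1}\bigr\|_{\Upsilon}^{2}.
\end{align*}
To derive it I would substitute the dynamics $x_{k+1}=A_k x_k + B_k u_{k-d}$, push the conditional expectations through using that $u_{k-d}$ is ${\cal F}_{k-d-1}$-measurable, and exploit the orthogonal decomposition $x_k = x_k|_{k-d-1} + (x_k - x_k|_{k-d-1})$. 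The cross-terms vanish by the tower property, while the diagonal pieces match \eqref{cmp1}--\eqref{cmpi} by construction: $\mathbf P^1$ absorbs the deterministic $A$-propagation and the noise piece $\bar A' \mathbf P^1 \bar A$ (through $\Upsilon$ and $M$), while $\mathbf P^2,\ldots,\mathbf P^{d+1}$ propagate the "known information" $x_{k|k-d-1}$ across the $d$ delay steps. Telescoping the identity over $k=d,d+1,\ldots,N$ and sending $N\to\infty$, the summability of $V_{N+1}$ along the closed loop follows from mean-square stability, which in turn is certified by exact observability of $(A-BK,\bar A-\bar B K\mid Q^{1/2})$ together with $\sum_{i=1}^{d+1}\mathbf P^i>0$.

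For necessity, I would first treat the finite-horizon problem $J_N = {\mathsf E}\sum_{k=0}^{N} (x_k'Qx_k+u_{k-d}'Ru_{k-d})$ with zero terminal penalty and derive the backward coupled recursion for time-indexed matrices $\mathbf P^{i}_k$ by dynamic programming, mirroring \eqref{cmp1}--\eqref{mm}. Uniqueness of the finite-horizon optimal controller makes this recursion well-posed as long as $\Upsilon_k>0$ at each step. Then, assuming the infinite-horizon problem is uniquely solvable (hence the optimal cost is finite for every initial $x_0,u_{-d},\ldots,u_{-1}$), the monotone non-decreasing limit $\mathbf P^{i}_0 \to \mathbf P^{i}$ as $N\to\infty$ exists, and passing to the limit in the recursion yields a solution of \eqref{cmp1}--\eqref{mm}. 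Exact observability of $(A,\bar A\mid Q^{1/2})$ is what forces $\sum_{i=1}^{d+1}\mathbf P^i>0$: if the sum were only semidefinite there would exist a nonzero $x_0$ with zero optimal cost, contradicting observability.

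I expect the main technical obstacle to be the bookkeeping in the completing-the-squares step, because the summands of $V_k$ sit at different filtration levels $k-d-1,\,k-d,\,\ldots,\,k-2$ and the update $V_{k+1}-V_k$ shuffles them in a way that requires carefully tracking how each $\mathbf P^i$ re-indexes. The secondary subtlety is justifying the infinite-horizon limit: one has to show that along any mean-square-stabilizing predictor-feedback the residual value $V_N\to 0$, which is where I would appeal to a discrete Lyapunov-type inequality and the prescribed positivity of $\sum_{i=1}^{d+1}\mathbf P^i$ together with exact observability to pin down the unique stabilizing solution.
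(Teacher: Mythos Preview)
The paper does not prove this lemma. Immediately before stating it, the authors write that the analytic solution ``has been provided in \cite[Th.~3]{zhang2015dis}'' and that ``for ease of reading, we restate \cite[Th.~3]{zhang2015dis} as a lemma.'' Lemma~\ref{lem1} is thus quoted from the literature without proof, and the paper's only subsequent work is the short linear change of variables in Lemma~\ref{lqrsol} that rewrites \eqref{cmp1}--\eqref{mm} in the form \eqref{ricpi}--\eqref{excgain}.

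Consequently there is no in-paper argument to compare your proposal against. Your plan---completing the squares along the layered value function for sufficiency, and a finite-horizon dynamic-programming recursion followed by a monotone limit for necessity---is the standard route for results of this type and is, as far as I can tell from the statement, a reasonable outline for how the cited theorem in \cite{zhang2015dis} is established. If you want to match what this paper actually does, the appropriate target is Lemma~\ref{lqrsol}, whose proof is purely algebraic and does not revisit the optimality or stability arguments at all.
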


Equations \eqref{cmp1}-\eqref{cmpi} are a variant of Riccati-ZXL equations \eqref{zequ}-\eqref{xequ}. Note that equations \eqref{cmp1}-\eqref{cmpi} are also coupled with each other and nonlinear in $\mathbf P^i$ for $i=1,\cdots, d+1$. It is not easy to directly resolve \eqref{cmp1}-\eqref{cmpi} for $\mathbf P^i, i=1,\cdots, d+1$. Thus, it is necessary to develop some efficient algorithms to attain numerically approximate solution of  \eqref{cmp1}-\eqref{cmpi}.  For this, we rewrite the above lemma as follows. 

\begin{lemma}\label{lqrsol}
Suppose that $(A, \bar A, Q^{1/2})$ is exactly observable. The problem $\min_u J$  subject to \eqref{sys} is uniquely solvable if and only if 
Riccati-type equations 
\begin{align}
&P^{i-1}=A'P^{i}A+Q, i=1,\cdots, d-1,\label{ricpi}\\
&P^d=(A-BK)'P^d(A-BK)+(\bar A-\bar B K)'P^0(\bar A-\bar BK)\no\\&~~~~~~~~~~~~~~~~~~~~~~~~~~~~~~~~~+K'RK+Q,\label{ricpd}\\
&K=(R+B'P^dB+\bar B'P^0\bar B)^{-1}(B'P^dA+\bar B'P^0\bar A)\label{excgain}
\end{align}
 have a unique positive definite solution $P^i, i=0,\cdots, d$. Moreover, 
the optimal controller and the value function for $k>d$ are given by
$u_{k-d}=-Kx_k|_{k-d-1}$ and $V_k={\mathsf E}[x_k'(P^dx_k|_{k-d-1}+\sum_{i=1}^{d} P^{i-1}x_k|^{k-i}_{k-i-1})]$, respectively.
\end{lemma}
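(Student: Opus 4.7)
\emph{Proof plan.} The plan is to derive Lemma \ref{lqrsol} from Lemma \ref{lem1} via a linear change of variables that compresses the matrices $\{\mathbf{P}^i\}_{i=1}^{d+1}$ of Lemma \ref{lem1} into the matrices $\{P^i\}_{i=0}^{d}$ appearing here. The guiding observation is that the expressions for $\Upsilon$ and $M$ in Lemma \ref{lem1} depend on the $\mathbf{P}^i$ only through $\mathbf{P}^1$ and $\sum_{i=1}^{d+1}\mathbf{P}^i$. This suggests setting $P^0:=\mathbf{P}^1$ and $P^d:=\sum_{i=1}^{d+1}\mathbf{P}^i$, which makes the gain formula \eqref{excgain} coincide with $K=\Upsilon^{-1}M$ from Lemma \ref{lem1}. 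The intermediate matrices $P^1,\ldots,P^{d-1}$ are then defined as appropriate partial sums, or equivalently by running the backward Lyapunov recursion \eqref{ricpi}.

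With this identification, the closed-loop Riccati equation \eqref{ricpd} for $P^d$ arises by summing \eqref{cmp1}--\eqref{cmpi}: the chain \eqref{cmpi} lets the tail $\sum_{i\geq 3}\mathbf{P}^i$ be rewritten as $A'(\sum_{j\geq 2}\mathbf{P}^j)A$, and combining this with \eqref{cmp1} and \eqref{cmp2} yields an open-loop expression for $P^d$ that becomes the closed-loop form \eqref{ricpd} after completing the square in $K=\Upsilon^{-1}M$. The Lyapunov relations \eqref{ricpi} for the intermediate $P^i$ follow by telescoping \eqref{cmpi} through each partial sum and absorbing the $Q$-contributions.

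For the value-function identity, I would exploit the innovation decomposition $x_k = x_k|_{k-d-1}+\sum_{i=0}^{d} x_k|_{k-i-1}^{k-i}$ together with the orthogonality ${\mathsf E}[(x_k|_{k-i-1}^{k-i})' N\, x_k|_{k-j-1}^{k-j}]=0$ for $i\neq j$ and arbitrary constant matrix $N$, which follows from the tower property since ${\mathsf E}[x_k|_{k-i-1}^{k-i}\mid{\cal F}_{k-i-1}]=0$. Applying this orthogonality to the expression for $V_k$ in Lemma \ref{lem1} collapses all cross terms and leaves a sum of pure-square innovation contributions whose coefficients match, by the partial-sum identification above, the $P^i$ appearing in the value function stated here.

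The main obstacle will be the algebraic book-keeping to verify that the two routes to the intermediate $P^i$---matching value-function coefficients on one side and running the recursion \eqref{ricpi} on the other---yield the same matrices. Once that compatibility is in hand, uniqueness and positive-definiteness of $\{P^i\}_{i=0}^d$ transfer directly from the corresponding properties of $\{\mathbf{P}^i\}_{i=1}^{d+1}$ in Lemma \ref{lem1}, since the change of variables is linear and invertible.
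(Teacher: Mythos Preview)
Your approach is essentially the same as the paper's: introduce $P^0=\mathbf P^1$ and $P^i=P^{i-1}+\mathbf P^{d+2-i}$ (equivalently, $P^d=\sum_{j=1}^{d+1}\mathbf P^j$ and the intermediate $P^i$ are the partial sums), verify that \eqref{cmp1}--\eqref{cmpi} become \eqref{ricpi}--\eqref{ricpd} with $K=\Upsilon^{-1}M$, and note that the transformation is invertible for the converse direction. Your treatment of the value function via the innovation decomposition and orthogonality is more explicit than the paper's, which simply quotes the transformation, but the idea is the same.

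One point needs correction. Your last sentence claims that positive-definiteness of $\{P^i\}$ ``transfers directly'' from that of $\{\mathbf P^i\}$ because the change of variables is linear and invertible. That reasoning is wrong on both ends: Lemma~\ref{lem1} does \emph{not} assert $\mathbf P^i>0$ (indeed $\mathbf P^2=-M'\Upsilon^{-1}M\le 0$, and by \eqref{cmpi} all $\mathbf P^i\le 0$ for $i\ge 2$), and linear invertibility alone never transports definiteness. The paper's actual argument is: $P^d=\sum_j\mathbf P^j>0$ by hypothesis, and since $\mathbf P^{d+2-i}\le 0$ one has $P^{i-1}=P^i-\mathbf P^{d+2-i}\ge P^i$, hence $0<P^d\le P^{d-1}\le\cdots\le P^0$. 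For the converse, $\sum_j\mathbf P^j=P^d>0$ and $\Upsilon=R+B'P^dB+\bar B'P^0\bar B>0$ follow immediately from $R>0$ and $P^0,P^d>0$. You should replace the ``linear and invertible'' justification with this sign-structure argument.
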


\begin{proof}
According to Lemma \ref{lem1}, we only need to show that the necessary and sufficient conditions in Lemma \ref{lem1} and this lemma are equivalent. First, we will derive the condition in this lemma from that in lemma \ref{lem1}. Denote  
\begin{align}
P^0=\mathbf P^1,
 P^i=P^{i-1}+\mathbf P^{d+2-i}, i=1,\cdots, d. \label{pipi}
\end{align}
Now direct algebraic manipulation based on \eqref{cmp1}-\eqref{cmpi} shows  that $P^i$ defined by \eqref{pipi} satisfies \eqref{ricpi}-\eqref{ricpd}. 
We then testify that $P^i$, $i=0,\cdots, d$, is positive definite. The positive definiteness of matrices  $\sum_{i=1}^{d+1}\mathbf P^i$ and $\Upsilon=R+\sum_{i=1}^{d+1}B'\mathbf P^{i}B+\bar{B}'\mathbf P^1\bar{B}$ in Lemma \ref{lem1} implies that $\mathbf P^1>0$ and $\mathbf P^i\le 0$, $i=2,\cdots, d+1$. In this case, \eqref{pipi} means $P^i\le P^{i-1}$, $i=1,\cdots, d$. In fact, it is easy to derive  from \eqref{pipi} that 
 $P^d=\sum_{j=1}^{d+1}\mathbf P^{i}$, and thus $P^d>0$.  Further, $0<P^d\le P^{d-1}\le \cdots \le P^0$.
In reverse, we shall demonstrate that the sufficient and necessary condition in this lemma implies that in Lemma \ref{lem1}. Note  that the  linear transformation \eqref{pipi} is nonsingular. Let
\begin{align}
\mathbf P^1=P^0,
\mathbf P^{d+2-i}= P^i-P^{i-1}, i=1,\cdots, d.
\end{align} 
It is directly deduced from\eqref{ricpi}-\eqref{excgain} that $\mathbf P^i$, $i=1,\cdots,d+1$, admits \eqref{cmp1}-\eqref{cmpi} with $\Upsilon$ and $M$ as in \eqref{upsi} and \eqref{mm}, respectively. As $P^i>0$, $i=0,\cdots, d$,  it is clear that $\sum_{i=1}^{d+1}\mathbf P^i=P^d>0$ and $\Upsilon=R+\sum_{i=1}^{d+1}B'\mathbf P^{i}B+\bar{B}'\mathbf P^1\bar{B}>0$.

\end{proof}

\subsection{Sufficient Stabilizing Condition}
Note that the optimal and stabilizing controller of $\min _u J$ subject to \eqref{sys} is in form of predictor-feedback. For proposing reasonable a RL-based control policy,   this subsection is devoted to characterizing all predictor-feedback controllers stabilizing system \eqref{sys}. 

\begin{lemma}\label{sufcon}
For given $K$ and $Q\ge 0$, assume $(A-BK,\bar A-\bar B K |Q^{1/2})$  is exactly observable.  If there exists matrix $P^i>0$, $i=0,\cdots, d$, satisfying the following equations 
\begin{align}
&P^{i-1}=A'P^{i}A+\bar A'P^0\bar A+Q, i=1,\cdots, d-1,\label{lyapi}\\
&P^d=(A-BK)'P^d(A-BK)\no\\&~~~~~~~~~~~~~~+(\bar A-\bar BK)'P^0(\bar A-\bar BK)+Q\label{lyapd},
\end{align}
then system \eqref{ksys} is asymptotically mean-square stable.   
\end{lemma}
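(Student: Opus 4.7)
My plan is to construct an innovation-based Lyapunov function whose one-step expected decrement equals ${\mathsf E}[x_k'Qx_k]$, and then combine summability of the running cost with the exact-observability hypothesis on $(A-BK,\bar A-\bar BK\mid Q^{1/2})$ to conclude mean-square stability of \eqref{ksys}. Guided by the value-function expression in Lemma \ref{lqrsol}, I take
\begin{equation*}
V_k \,=\, {\mathsf E}\bigl[x_k|_{k-d-1}' P^d\, x_k|_{k-d-1}\bigr] \,+\, \sum_{i=1}^d {\mathsf E}\bigl[(x_k|^{k-i}_{k-i-1})' P^{i-1} (x_k|^{k-i}_{k-i-1})\bigr].
\end{equation*}
The telescoping identity $x_k = x_k|_{k-d-1} + \sum_{i=1}^d x_k|^{k-i}_{k-i-1}$ together with the $L^2$-orthogonality of distinct innovations and the positive definiteness of each $P^i$ gives $V_k \ge c\,{\mathsf E}[x_k'x_k]\ge 0$ for some $c>0$, so $V_k$ is a genuine stochastic Lyapunov candidate for \eqref{ksys}.

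Next I would project the closed-loop dynamics onto the relevant filtrations and record three identities: (a) $x_{k+1}|_{k-d} = (A-BK)x_k|_{k-d-1} + A\,x_k|^{k-d}_{k-d-1}$; (b) for $2\le i\le d$, $x_{k+1}|^{k-i+1}_{k-i} = A\,x_k|^{k-i+1}_{k-i}$; and (c) the fresh one-step innovation $x_{k+1}|^{k}_{k-1} = w_k\bigl(\bar A x_k - \bar B K x_k|_{k-d-1}\bigr)$. Substituting (a)--(c) into $V_{k+1}$, expanding $x_k$ inside (c) via the telescoping identity so that $\bar A x_k - \bar B K x_k|_{k-d-1} = (\bar A-\bar BK)x_k|_{k-d-1} + \sum_{j=1}^d \bar A\,x_k|^{k-j}_{k-j-1}$, and using ${\mathsf E}[w_k^2]=1$ together with the pairwise orthogonality of distinct innovations to kill every cross term, I would read off the predictable coefficient as $(A-BK)'P^d(A-BK) + (\bar A-\bar BK)'P^0(\bar A-\bar BK)$ and the coefficient of each increment $x_k|^{k-j}_{k-j-1}$, $j=1,\dots,d$, as $A'P^{j}A + \bar A'P^0\bar A$ (at $j=d$ the contribution $A'P^dA$ comes from the $A\,x_k|^{k-d}_{k-d-1}$ piece of (a) and $\bar A'P^0\bar A$ from the $\bar A$-expansion of (c)).

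Applying \eqref{lyapd} and \eqref{lyapi} to these coefficients collapses them to $P^d - Q$ and $P^{j-1} - Q$ respectively, and reassembling ${\mathsf E}[x_k'Qx_k]$ from its innovation-orthogonal pieces produces the key one-step identity
\begin{equation*}
V_{k+1} \,=\, V_k - {\mathsf E}[x_k'Qx_k].
\end{equation*}
Telescoping over $k$ gives $\sum_{k=0}^\infty {\mathsf E}[x_k'Qx_k] \le V_0 < \infty$, hence ${\mathsf E}\|Q^{1/2}x_k\|^2\to 0$. A standard stochastic-observability argument applied under the exact observability of $(A-BK,\bar A-\bar BK\mid Q^{1/2})$ for system \eqref{ksys} upgrades this to ${\mathsf E}[x_k'x_k]\to 0$, completing the proof.

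The main obstacle I anticipate is the bookkeeping inside the computation of $V_{k+1}$: the $w_k$-coupled term (c) is the only channel through which $\bar A$ and $\bar B K$ enter the next-step Lyapunov increment, and tracking how its $\bar A$-expansion distributes across the coefficient of every past increment (without double-counting against the deterministic contributions in (a) and (b)) is where an arithmetic slip is most likely. The concluding observability step is routine, and the coefficient matching above is precisely what dictates the form of the Lyapunov-type equations \eqref{lyapi}--\eqref{lyapd}, which is the structural insight of the lemma.
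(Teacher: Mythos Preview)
Your proposal is correct and follows essentially the same route as the paper: the same innovation-decomposed Lyapunov functional $V_k$, the same filtration projections (your (a)--(c) are exactly the paper's \eqref{kisys}--\eqref{tildex1}), and the same one-step identity $V_{k+1}-V_k=-{\mathsf E}[x_k'Qx_k]$, followed by the exact-observability argument to conclude. The only cosmetic difference is that you finish via summability of $\sum_k{\mathsf E}[x_k'Qx_k]$ whereas the paper phrases the last step in LaSalle-type language; these are equivalent here.
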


\begin{proof}
Our proof is based on Lyapunov stability theorem.
Define a Lyapunov functional candidate
\begin{align}
V_{k}=&{\mathsf E}[x_{k}'(P^dx_{k}|_{k-d-1}+\sum_{i=0}^dP^{i-1}x_{k}|_{k-1-i}^{k-i})], \label{lyafuncan}
\end{align}
where $P^i, i=0,\cdots, d$, is the positive definite solution to equations \eqref{lyapi}-\eqref{lyapd}, $x_{k}|_{k-1-i}^{k-i}=x_{k}|_{k-i}-x_{k}|_{k-1-i}$, and 
\begin{align}
&x_{k+1}|_{k-i}=Ax_k|_{k-i}-BKx_{k|k-d-1}, i=1,\cdots,d. \label{kisys}
\end{align}
which is obtained by taking conditional expectations over  ${\cal F}_{k-i-1}$ on both sides of the system \eqref{ksys}. In view of \eqref{kisys},  there hold
\begin{align}
&x_{k+1}|_{k+1-i}-x_{k+1}|_{k-i}=A(x_k|_{k+1-i}-x_k|_{k-i}),\no\\&~~~~~~~~~~~~~~~~~~~~i=2,\cdots,d-1, \label{tildexi}\\
&x_{k+1}|_{k}-x_{k+1}|_{k-1}=w_k(\bar Ax_k-\bar B Kx_k|_{k-d-1}).\label{tildex1}
\end{align}
Along with system \eqref{ksys},  \eqref{tildexi} and \eqref{tildex1},  $V_{k+1}$ is rewritten as below.
\begin{align}
&V_{k+1}={\mathsf E}[||x_{k+1}|_{k-d}||+\sum_{i=0}^d||x_{k+1}|_{k-i}^{k+1-i}||_{P^{i-1}}]\no\\
=&{\mathsf E}[ ||Ax_k|_{k-d-1}^{k-d}+(A-BK)x_{k|k-d-1})||_{P^d}\no\\
&+\sum_{i=2}^d ||x_k|_{k-i}^{k+1-i}||_{A'P^{i-1}A}\no\\
&+||\bar A-\bar B K)x_k|_{k-d-1}+\bar Ax_k|_{k-d-1}^{k-1}||_{P^0}\no\\
=&{\mathsf E}||x_k|_{k-d-1}^{k-d}||_{A'P^dA+\bar A'P^0\bar A}\no\\
&+||x_k|_{k-d-1}||_{(A-BK)'P^d(A-BK)+(\bar A-\bar BK)'P^0(\bar A-\bar BK)}\no\\
&+\sum_{i=1}^{d-1} ||x_k|_{k-i-1}^{k-i}||_{A'P^{i}A+\bar A'P^0\bar A}.
\end{align}
Combining it with \eqref{lyapi}-\eqref{lyapd} shows
\begin{align}
V_{k+1}-V_k=-{\mathsf E}[x_k'Qx_k]\le 0. \label{deltavk}
\end{align}
The inequality above has used the positive semi-definiteness of  $Q$. If  ${\mathsf E}[x_k'Qx_k]=0$ for $k=j,\cdots, N$, where $N>0$ is arbitrary and $j$ is the initial time,  then $Q^{1/2}x_k \equiv 0$ holds for $k$ in $[j, N]$ almost surely. Recall  the exact observability of $(A-BK, \bar A-\bar BK|Q^{1/2})$.  In this case, $x_j=0$. Initilizing the system at any $k$,  $x_k=0$ for  $k=j,\cdots, $  almost surely. 
According to  Lyapunov stability theory,  system \eqref{ksys} is asymptotically mean-square stable. 
\end{proof}

\subsection{Necessary Stabilizing Condition}

We have provided a sufficient stabilizing condition for system \eqref{ksys} in form of  Lyapunov-type equations. We are also interested in discussing necessary stabilizing conditions of system \eqref{ksys}.  

\begin{lemma}\label{neccon}
For given $K$ and $Q\ge 0$, if  system \eqref{ksys} is asymptotically mean-square stable,  the following Lyapunov-type equations 
\begin{align}
&S^0=(\bar{A}-\bar{B}K)S^d(\bar{A}-\bar{B}K)'+\bar{A}\sum_{i=0}^{d-1}S^i\bar{A}',\label{ms0}\\
&S^i=AS^{i-1}A',\label{msi}\\
&S^d=(A-BK)S^d(A-BK)'+AS^{d-1}A'+Q\label{msd}
\end{align}
have a positive semi-definite solution, and matrix 
\begin{small}
$${\cal A}=\left[\begin{matrix}\bar{A}\otimes\bar{A}&\bar{A}\otimes\bar{A}&\bar{A}\otimes\bar{A}&\cdots&  (\bar{A}-\bar{B}K)\otimes (\bar{A}-\bar{B}K)\\
{A}\otimes{A}&0&0&\cdots&0\\
0&{A}\otimes{A}&0&\cdots&0\\
0&0&{A}\otimes{A}&\cdots&0\\
0&0&0&\cdots& (A-BK)\otimes (A-BK)\end{matrix}\right]$$
\end{small} is Schur.
\end{lemma}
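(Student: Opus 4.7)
The plan is to identify the $S^i$'s with cumulative covariances of an orthogonal decomposition of the closed-loop state, and to view $\mathcal{A}$ as the Kronecker-lifted covariance propagator of \eqref{ksys}. Mean-square stability will then force $\mathcal{A}$ to be Schur, and the desired PSD solution of \eqref{ms0}--\eqref{msd} will emerge as a Neumann-type sum.

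First I would decompose $x_k = \bar{x}_k + \sum_{j=1}^{d}\tilde{x}^{(j)}_k$, with $\bar{x}_k \doteq x_k|_{k-d-1}$ and $\tilde{x}^{(j)}_k \doteq x_k|_{k-j} - x_k|_{k-j-1}$, exactly as in the proof of Lemma \ref{sufcon}. By the tower property these summands are pairwise ${\mathsf E}$-orthogonal. Conditioning \eqref{ksys} on the relevant $\sigma$-fields and differencing yields
\begin{align*}
\bar{x}_{k+1} &= (A-BK)\bar{x}_k + A\tilde{x}^{(d)}_k,\\
\tilde{x}^{(j)}_{k+1} &= A\tilde{x}^{(j-1)}_k, \quad j=2,\dots,d,\\
\tilde{x}^{(1)}_{k+1} &= w_k\Bigl[(\bar A-\bar B K)\bar{x}_k + \bar A\sum_{j=1}^{d}\tilde{x}^{(j)}_k\Bigr].
\end{align*}
Writing $T_k \doteq {\mathsf E}[\bar{x}_k\bar{x}_k']$ and $\Sigma^{(j)}_k \doteq {\mathsf E}[\tilde{x}^{(j)}_k(\tilde{x}^{(j)}_k)']$, the pairwise orthogonality together with ${\mathsf E}[w_k^2]=1$ and the independence of $w_k$ from ${\cal F}_{k-1}$-measurable quantities yields a closed linear covariance recursion. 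Stacking $\vec\Sigma_k \doteq [\mathrm{vec}(\Sigma^{(1)}_k);\dots;\mathrm{vec}(\Sigma^{(d)}_k);\mathrm{vec}(T_k)]$ and using $\mathrm{vec}(MXM')=(M\otimes M)\mathrm{vec}(X)$, this reduces to $\vec\Sigma_{k+1} = \mathcal A\vec\Sigma_k$ with $\mathcal A$ exactly the matrix in the statement, under the identification $\Sigma^{(j)}\leftrightarrow S^{j-1}$ for $j=1,\dots,d$ and $T\leftrightarrow S^d$.

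To show $\mathcal A$ is Schur, I would observe that mean-square stability gives ${\mathsf E}[x_k'x_k] = \mathrm{tr}(T_k) + \sum_j \mathrm{tr}(\Sigma^{(j)}_k)\to 0$ for every admissible initial data; positive semi-definiteness of each summand forces each $T_k,\Sigma^{(j)}_k\to 0$, hence $\mathcal A^k\vec\Sigma_0\to 0$ for every $\vec\Sigma_0$ realizable from initial data. The block-triangular shift structure of $\mathcal A$ shows that iterating $\mathcal A$ on inputs of the form $[0;\dots;0;\mathrm{vec}(M)]$ populates every block within $d$ steps; combining this with linearity in $M$ (writing any symmetric $M$ as a difference of PSD matrices) and with the freedom to also excite the initial $u_{-j}$'s, the $\mathcal A$-invariant subspace generated by realizable $\vec\Sigma_0$'s exhausts the symmetric part of $({\cal R}^{n\times n})^{d+1}$. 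Since $\mathcal A$ acts on the antisymmetric part by the same Kronecker-product formulas, $\mathcal A^k\to 0$ globally, which is exactly Schur-ness. Then $I-\mathcal A$ is invertible and
\[
\vec S \doteq (I-\mathcal A)^{-1}\vec Q_0 = \sum_{k=0}^{\infty}\mathcal A^k\vec Q_0,\quad \vec Q_0\doteq[0;\dots;0;\mathrm{vec}(Q)],
\]
uniquely solves $\vec S=\mathcal A\vec S+\vec Q_0$, i.e., the vectorized form of \eqref{ms0}-\eqref{msd}. Since this Neumann sum coincides with $\sum_{k=0}^{\infty}\vec\Sigma_k$ for the covariance trajectory started at $T_0=Q\ge 0$, $\Sigma^{(j)}_0=0$, each block of $\vec S$ is a sum of PSD matrices and is therefore itself PSD, giving $S^i\ge 0$ as claimed.

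The main obstacle will be the span step in the Schur-ness argument: establishing rigorously that, as the initial data ranges over admissible choices, the realizable $\vec\Sigma_0$'s together with their $\mathcal A$-iterates generate enough of $({\cal R}^{n\times n})^{d+1}$ to upgrade convergence on a subcone to convergence on the full space. My preferred route combines monotonicity of $\mathcal A$ on the PSD block cone with a Perron-Frobenius-type argument: if $\rho(\mathcal A)\ge 1$, then $\mathcal A$ admits a PSD block-diagonal Perron eigenvector, and a scaled realizable initial covariance can be made to dominate it, so that mean-square stability along the realizable trajectory would contradict the non-decay of the dominated Perron trajectory.
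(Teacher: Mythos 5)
Your construction is, in substance, the paper's own: the orthogonal decomposition $x_k=\bar x_k+\sum_{j}\tilde x^{(j)}_k$ and its covariance recursion are exactly the paper's predictor covariances $X^i_k={\mathsf E}[x_k|_{k-i-1}x_k|_{k-i-1}']$ and their differences in \eqref{deltaxxk0}--\eqref{exxkd}, the vectorized propagator is the same $\mathcal A$, and your Neumann-sum construction of $(S^0,\dots,S^d)$ is the paper's $X^i=\sum_{k}X^i_k$ (the paper invokes Theorem 1 of \cite{huangajc2008} for summability, but the content is the same). So the route is not genuinely different; everything hinges on whether your Schur-ness step closes.

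There the sketch has a genuine gap, at exactly the place you flag. Mean-square stability of \eqref{ksys} only gives $\mathcal A^k v\to 0$ for $v$ that are realizable covariance tuples, hence tuples of real symmetric PSD blocks; your span/linearity remark does upgrade this to the whole symmetric part, since $\{v:\mathcal A^k v\to 0\}$ is a subspace and single-block rank-one tuples are realizable (initialize $x_j=c$ or $x_j=w_{j-i}c$, as the paper itself does in the proof of Lemma \ref{necsta}). But the jump from the symmetric part to all of ${\cal R}^{n^2(d+1)}$ is not justified by saying $\mathcal A$ ``acts on the antisymmetric part by the same Kronecker-product formulas'': both the symmetric and antisymmetric subspaces are $\mathcal A$-invariant, and decay on one invariant subspace implies nothing a priori about the other. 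The claim is true for this sum-of-congruences (completely positive) block map $\Phi$, but it needs an argument, e.g.\ the Schwarz inequality $\Phi(Y)'\Phi(Y)\preceq \|\Phi(\mathcal I)\|\,\Phi(Y'Y)$ (with $\mathcal I$ the identity tuple), which bounds $\|\Phi^k(Y)\|$ for antisymmetric $Y$ by quantities evaluated only at PSD arguments, or equivalently a Perron-type argument that $\rho(\mathcal A)$ is attained on the PSD tuple cone. Your fallback Perron--Frobenius route also fails as literally stated: with a scalar noise, the per-gap innovation blocks produced by a single admissible initialization can all be rank one (take $w$ Bernoulli), so no one realizable tuple dominates a full-rank PSD Perron block-eigenvector; the contradiction must instead be routed through the subspace argument, after which you still need the symmetric-to-full upgrade above. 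To be fair, the paper's own proof asserts the equivalence with Schur-ness in a single sentence and so passes over the same point; but as written your proposal does not close it either.
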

\begin{proof}
Our proof depends on two important facts. Fact 1 is that $\lim_{k\rightarrow +\infty}{\mathsf E}[x_k'x_k]=0$ is equivalent to $\lim_{k\rightarrow +\infty}{\mathsf E}[x_kx_k']=0$. Fact 2 is  that $\lim_{k\rightarrow +\infty}{\mathsf E}[x_k'x_k]=0$ means $\lim_{k\rightarrow +\infty}{\mathsf E}[x_k|_{k-i}'x_k|_{k-i}]=0$ and $\lim_{k\rightarrow +\infty}{\mathsf E}[(x_k-x_k|_{k-i})'(x_k-x_k|_{k-i})]=0$ because  of  ${\mathsf E}[x_k'x_k]={\mathsf E}[x_k|_{k-i}'x_k|_{k-i}]+{\mathsf E}[(x_k-x_k|_{k-i})'(x_k-x_k|_{k-i})]$, ${\mathsf E}[x_k|_{k-i}'x_k|_{k-i}]\ge 0$ as well as  ${\mathsf E}[(x_k-x_k|_{k-i})'(x_k-x_k|_{k-i})]\ge $ for $0<i<k$.

Let $X_k^i={\mathsf E}[x_k|_{k-i-1}x_k|_{k-i-1}']$ for $i=0,\cdots, d$. It can be derived from the predictor system \eqref{kisys} that
\begin{small}
\begin{align}
X_{k+1}^i=&AX_k^{i-1}A'-BKX_k^dA'-{A}X_k^dK'B'\no\\&~~~~~~~~+{B}KX_k^dK'{B}', i=1,\cdots,d, \label{exxki}\\
X_{k+1}^0=&AX_k^0A'+\bar{A}X_k^0\bar{A}'+BKX_k^dK'B'+\bar{B}KX_k^dK'\bar{B}'\no\\&-AX_k^dK'B'-\bar{A}X_k^dK'\bar{B}'-BK{X}_k^dA'-\bar{B}K{X}_k^d\bar{A}'.\label{exxk0}
\end{align}
\end{small}
Denote $\Delta X_k^{i}=X_k^{i}-X_k^{i+1}$ for $i=0,\cdots, d-1$. \eqref{exxk0} means
\begin{small}
\begin{align}
\Delta X_{k+1}^0=&\bar{A} X_k^0\bar{A}'-\bar{A}X_k^dK'\bar{B}'-\bar BK{X}_k^d\bar A'+\bar{B}K{X}_k^dK'\bar{B}', \label{deltaxxk0}\\
\Delta X_{k+1}^i=&A\Delta X_k^{i-1}A', i=1,\cdots, d-1, \label{deltaxxki}\\
X_{k+1}^d=&A\Delta X_k^{d-1}A'+(A-BK)X_k^d(A-BK)'.\label{exxkd}
\end{align}
\end{small}
When system \eqref{ksys} is asymptotically mean-square stable, according to Fact 1 and 2, $\Delta X_k^i$, $i=0,\cdots, d-1$ and $X_k^d$ are also  asymptotically stable, which is equivalent to that matrix  ${\cal A}$ 
is Schur  from the vectorized systems of the deterministic systems \eqref{deltaxxk0}-\eqref{exxkd}.

Denote $X^i=\sum_{k=0}^\infty X_k^i$ for $i=0,\cdots, d$ and $X_0^0=\cdots=X_0^d=Q\ge 0$. In view of  Theorem 1 in \cite{huangajc2008}, the stabilization of
system \eqref{sys} guarantees the existence of $X^i$ for $i=0,\cdots, d$. Moreover, we have
$0 \le X^d \le \cdots \le X^0<\infty$. Then, it can be deduced from \eqref{exxki}-\eqref{exxk0} that
\begin{align}
X^i-Q=&AX^{i-1}A'-BKX^dA'-{A}X^dK'B'\no\\&~~~~~~~~+{B}KX^dK'{B}', i=1,\cdots,d, \label{xi}\\
X^0-Q=&AX^0A'+\bar{A}X^0\bar{A}'+BKX^dK'B'\no\\
&+\bar{B}KX^dK'\bar{B}'-AX^dK'B'-\bar{A}X^dK'\bar{B}'\no\\
&-BK{X}^dA'-\bar{B}K{X}^d\bar{A}'.\label{x0}
\end{align}
Let $S^i=X^i-X^{i+1}$ for $i=0,\cdots, d-1$ and $S^d=X^d$. Then $X^0=S^d+\sum_{i=0}^{d-1}S^i$. Now it follows from equalities \eqref{xi} and \eqref{x0} that \eqref{ms0}-\eqref{msd} hold.
Notice that $S^d=X^d=\sum_{k=0}^\infty X_k^d$ and $Q\ge 0$. It is easy to know $S^d \ge 0$. Similarly, $S^0=\sum_{k=0}^\infty (X_k^i-X_k^{i+1})$ and $X_k^i-X_k^{i+1}\ge 0$ result in $S^i\ge 0$ for $i=0,\cdots, d-1$.   
\end{proof}
\begin{remark}
In the case of  $d=0$,  the Lyapunov-type equations \eqref{ms0}-\eqref{msd} are reduced as
\begin{align}
S^d=&(A-BK)S^d(A-BK)'\no\\&+(\bar{A}-\bar{B}K)S^d(\bar{A}-\bar{B}K)'+Q,
\end{align}
which is a standard generalized  Lyapunov equation.
\end{remark}

\begin{remark}
In the case of $\bar A=0$, the Lyapunov-type equations  \eqref{ms0}-\eqref{msd} are reduced as  
\begin{align}
S^d&=(A-BK)S^d(A-BK)'+A^{(d)}\bar{B}KS^dK'\bar{B}'{A^{(d)'}}+Q, \label{ssd}
\end{align}
which is actually a standard generalized  Lyapunov equation related to the multiplicative-noise system 
\begin{align}
x_{k+1}=Ax_k+(B+A^{(d)}\bar Bw_k)u_k.
\end{align}
The generalized Lyapunov equation \eqref{ssd} is in accordance with  \cite[eq. (18)]{tanscl2019}.
\end{remark}

\subsection{The Dual Relation between  Lyapunov-Type Equations }
To show that  the sufficient condition proposed in Lemma \ref{sufcon} is also necessary, we will regard the right-hand sides of the Lyapunov-type equations \eqref{lyapi}-\eqref{lyapd} and \eqref{ms0}-\eqref{msd} (neglecting the constant terms ) as  linear operators  from ${\mathcal R}^{n(d+1)\times n(d+1)}$ to  ${\mathcal R}^{n(d+1)\times n(d+1)}$ and discuss the relation between these two operators, where ${\mathcal R}^{n(d+1)\times n(d+1)}$ denotes $n(d+1)\times n(d+1)$ real matrix space.

Let  $f$ and $g$ be  linear operators from  ${\mathcal R}^{n(d+1)\times n(d+1)}$ to  ${\mathcal R}^{n(d+1)\times n(d+1)}$ as below:
\begin{small}
\begin{align}
f(P)=&\mathrm{diag}\{\bar A'P_{0}\bar A+ A'P_{1} A,\cdots, \bar A'P_{0}\bar A+ A'P_{d} A, \no\\&(\bar A-\bar BK)'P_0(\bar A-\bar BK)+(A-BK)'P_d(A-BK)\},\\
g(M)=&\mathrm{diag}\{\sum_{k=0}^{d-1}\bar AM_0\bar A'+(\bar A-\bar BK)M_d(\bar A-\bar BK)', A'M_1A, \no\\&\cdots, A'M_{d-2}A,A'M_{d-1}A+(A-BK)M_d(A-BK)'\},
\end{align}
\end{small}
where $P=\left[\begin{matrix}P_0 &* & \cdots & *\\ *& P_1& \cdots & * \\ * & * & \cdots & * \\ * & * &\cdots&P_d\end{matrix}\right] \in {\mathcal R}^{n(d+1)\times n(d+1)}$, $M=\left[\begin{matrix}M_0 &* & \cdots & *\\ *& M_1& \cdots & * \\ * & * & \cdots & * \\ * & * &\cdots&M_d\end{matrix}\right]\in {\mathcal R}^{n(d+1)\times n(d+1)}$, and $*$ denotes any real matrix. 

\begin{lemma}\label{duallem}
The linear operators $f$ and $g$ are dual on Hilbert space $({\mathcal R}^{n(d+1)\times n(d+1)}, \langle \cdot, \cdot\rangle)$, where $\langle \cdot, \cdot\rangle$ stands for  inner product  and is defined by  trace of  matrix product(denoted by $\mathrm {Tr}$).
\end{lemma}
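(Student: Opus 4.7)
The plan is to verify the adjoint identity $\langle f(P), M\rangle = \langle P, g(M)\rangle$ directly for every $P, M \in {\mathcal R}^{n(d+1)\times n(d+1)}$, with the trace inner product $\langle X, Y\rangle = \mathrm{Tr}(X'Y)$. Since $f(P)$ and $g(M)$ are both block-diagonal by construction, the two inner products collapse to sums over the diagonal blocks only: $\langle f(P), M\rangle = \sum_{i=0}^{d}\mathrm{Tr}(f(P)_i' M_i)$ and $\langle P, g(M)\rangle = \sum_{i=0}^{d}\mathrm{Tr}(P_i' g(M)_i)$, where $M_i$ and $P_i$ denote the $i$-th diagonal blocks of $M$ and $P$. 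So the off-diagonal entries of $P$ and $M$ never enter the computation.

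The first step is to substitute the explicit formula for each diagonal block of $f(P)$ into $\sum_i \mathrm{Tr}(f(P)_i' M_i)$, producing scalar traces of the form $\mathrm{Tr}((A'P_jA)'M_i)$, $\mathrm{Tr}((\bar A'P_0\bar A)'M_i)$, and the corresponding $(A-BK)$, $(\bar A-\bar BK)$ terms arising from the last block. Each such term is then reshuffled by the cyclic property of trace, for example $\mathrm{Tr}((A'P_jA)'M_i) = \mathrm{Tr}(A'P_j'AM_i) = \mathrm{Tr}(P_j'(AM_iA'))$, which moves the $A$'s (or $\bar A$'s, or $A-BK$'s) from flanking $P_j$ over to flanking $M_i$, with the appropriate transposes.

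Next, I would regroup the resulting expression according to which $P_j'$ each term multiplies. The coefficient of $P_0'$ will collect one contribution $\bar A M_i\bar A'$ from every one of the first $d$ blocks of $f(P)$ (for $i=0,\ldots,d-1$) together with $(\bar A-\bar BK)M_d(\bar A-\bar BK)'$ from the last block, assembling into $g(M)_0$. For $1\le j\le d-1$, the coefficient of $P_j'$ comes only from the index-shifted term $A'P_jA$ appearing in the $(j-1)$-th block of $f(P)$, yielding $AM_{j-1}A' = g(M)_j$. The coefficient of $P_d'$ combines $AM_{d-1}A'$ from the $(d-1)$-th block with $(A-BK)M_d(A-BK)'$ from the last block, giving $g(M)_d$. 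Summation gives $\sum_{j=0}^d\mathrm{Tr}(P_j'g(M)_j) = \langle P, g(M)\rangle$, as required.

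The only obstacle is pure bookkeeping: tracking the index shift $j = i+1$, and making sure the first block of $g(M)$ aggregates the full sum $\sum_{i=0}^{d-1}\bar A M_i\bar A'$ (one contribution from each of the first $d$ diagonal blocks of $f(P)$, all sharing the common $\bar A'P_0\bar A$ term) rather than a single summand. No analytic subtlety is involved; the duality is a purely algebraic consequence of linearity of the operators together with the cyclic property of the trace.
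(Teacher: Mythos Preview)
Your proposal is correct and follows essentially the same approach as the paper: expand $\langle f(P),M\rangle$ as a sum of traces over the diagonal blocks, apply the cyclic property of trace to shift the conjugating matrices from $P_j$ onto $M_i$, and regroup by the index $j$ to recognize the blocks of $g(M)$. The only cosmetic difference is that you work with the Frobenius pairing $\mathrm{Tr}(X'Y)$ while the paper writes $\mathrm{Tr}(XY)$; since only block-diagonal contributions survive and the cyclic rearrangement is identical, this makes no difference to the argument.
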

\begin{proof}
Denote $f^*$ as  dual operator of $f$. Then for any $P, M \in {\mathcal R}^{n(d+1)\times n(d+1)}$, there holds 
\begin{align}
\langle f(P), M\rangle=\langle P, f^*(M)\rangle.\label{dualope}
\end{align}
Notice that  
\begin{small}
\begin{align}
&\langle f(P), M\rangle=\mathrm{ Tr}(f(P)M)\no\\
=&\mathrm{ Tr}(\sum_{i=1}^{d}(\bar A'P_{0}\bar A+ A'P_{i} A)M_{i-1}+(\bar A-\bar BK)'P_0(\bar A-\bar BK)M_{d}\no\\&~~~~~~~~+(A-BK)'P_d(A-BK)M_{d})\no\\
=&\mathrm{ Tr}(\sum_{i=1}^{d}[P_0(\bar A'M_{i-1}\bar A)+P_i(A'M_{i-1}A)]+P_0(\bar A-\bar BK)M_{d}\no\\&~~~~~~~~\times(\bar A-\bar BK)'+P_d(A-BK)M_{d}(A-BK)')\no\\
=&\langle P, g(M)\rangle, 
\end{align}
\end{small}
which together with \eqref{dualope} means $f^*(M)=g(M)$.
The proof is completed.
\end{proof}

 The dual relation provides  theoretical basis for  the following lemma, which is a necessary condition of stabizabilition. 
 
\begin{lemma}\label{necsta}
For given $K$ and $Q\ge 0$, assume $(A-BK,\bar A-\bar B K |Q^{1/2})$  is exactly observable. The Lyapunov-type equations \eqref{lyapi}-\eqref{lyapd} have a unique positive definite  solution if  system \eqref{ksys} is asymptotically mean-square stable.
\end{lemma}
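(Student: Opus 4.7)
The plan is to exploit the duality of Lemma \ref{duallem} to transfer the Schur condition from Lemma \ref{neccon} into a spectral bound for the operator that governs \eqref{lyapi}--\eqref{lyapd}, and then to upgrade positive semi-definiteness to strict positive definiteness via the exact observability hypothesis. First, I would recast \eqref{lyapi}--\eqref{lyapd} as a single fixed-point equation $P = f(P) + \mathbf{Q}$ on the space of block-diagonal matrices $P = \mathrm{diag}\{P^0,\ldots,P^d\}$, with $\mathbf{Q} = \mathrm{diag}\{Q,\ldots,Q\}$.

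By Lemma \ref{neccon}, asymptotic mean-square stability of \eqref{ksys} implies that the matrix $\mathcal{A}$ is Schur, which is exactly the statement that the operator $g$, restricted to its invariant subspace of block-diagonal matrices and vectorized, has spectral radius strictly less than one. Lemma \ref{duallem} identifies $f$ and $g$ as mutual adjoints on the finite-dimensional Hilbert space $({\mathcal R}^{n(d+1)\times n(d+1)}, \langle\cdot,\cdot\rangle)$, so they share the same spectrum; hence $\rho(f)<1$ as well. This at once yields existence and uniqueness: $I-f$ is invertible and $P = \sum_{k=0}^{\infty} f^k(\mathbf{Q})$ is the unique solution. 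Because $f$ preserves the cone of positive semi-definite block-diagonal matrices and $\mathbf{Q}\ge 0$, every $P^i$ in this solution is at least positive semi-definite.

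For strict positive definiteness, I would re-use the Lyapunov-function machinery of Lemma \ref{sufcon} in the reverse direction. With the PSD solution $\{P^i\}$ now in hand, the very calculation that produced \eqref{deltavk} still gives $V_{k+1}-V_k = -\mathsf{E}[x_k'Qx_k]$ along trajectories of \eqref{ksys}, where $V_k$ is the functional \eqref{lyafuncan}. Mean-square stability implies $V_k\to 0$, so summation yields $V_0 = \sum_{k=0}^{\infty}\mathsf{E}[x_k'Qx_k]$. Suppose for contradiction that some $P^j$ has a nontrivial kernel and pick $v\ne 0$ with $v'P^j v = 0$. I would then tune the initial data $x_0, u_{-d},\ldots, u_{-1}$ so that, in \eqref{lyafuncan}, only the $P^j$ term survives and contributes exactly $v'P^j v$. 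The summed identity then forces $Q^{1/2} x_k \equiv 0$ almost surely, and the exact observability of $(A-BK,\bar{A}-\bar{B}K\,|\,Q^{1/2})$ forces the associated closed-loop trajectory, and hence $v$, to vanish, a contradiction.

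The step I expect to be the main obstacle is this last one: concretely choosing initial conditions $x_0, u_{-d},\ldots,u_{-1}$ that isolate an arbitrarily prescribed $P^j$ inside $V_0$. The functional \eqref{lyafuncan} couples the various $P^i$'s through conditional-expectation increments of the state under the closed loop, and those increments are mixed by the initial control history. Separating the contribution of a single $P^j$ demands a careful algebraic choice of the $u_{-i}$'s, after which the observability argument goes through cleanly. The duality and spectral steps, by contrast, are essentially bookkeeping once Lemmas \ref{neccon} and \ref{duallem} are invoked in tandem.
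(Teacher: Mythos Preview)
Your outline matches the paper's proof almost exactly: the paper also vectorizes \eqref{lyapi}--\eqref{lyapd}, obtains the coefficient matrix $\mathcal{A}'$ (which is your adjoint $f$ written in matrix form), invokes Lemma~\ref{neccon} to conclude it is Schur, and then sums \eqref{deltavk} along the closed loop and uses exact observability just as you describe. The only point to correct is your plan for the isolation step you flagged as the main obstacle. The initial controls $u_{-d},\ldots,u_{-1}$ do not appear in $V_j$ at all (inspect \eqref{lyafuncan}: it depends only on $x_j$ and its conditional expectations), so tuning them cannot separate the $P^i$'s. The paper's device is instead to randomize the initial \emph{state}: taking $x_j=c$ deterministic makes every increment $x_j|_{j-i-1}^{j-i}$ vanish and yields $V_j=c'P^dc$; taking $x_j=w_sc$ with $s\in\{j-1,\ldots,j-d\}$ makes exactly the increment at level $i=j-s$ survive (since $\mathsf{E}[w_s\mid\mathcal F_{m}]$ is $w_s$ or $0$ according as $m\ge s$ or $m<s$) and yields $V_j=c'P^{i-1}c$. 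With that substitution the observability contradiction goes through exactly as you sketched.
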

\begin{proof}
The proof will be divided into two parts.  One is  to show that \eqref{lyapi}-\eqref{lyapd} have a unique solution, the other is  to prove positive definiteness of  the unique solution.

First,  the dual relation in Lemma \ref{duallem} is intrinsic argument that \eqref{lyapi}-\eqref{lyapd} have a unique solution. Assume that system \eqref{ksys} is asymptotically mean-square stable. For ease of reading,  rewrite the equations  \eqref{lyapi}-\eqref{lyapd} as 
\begin{align}
\begin{bmatrix} \mathrm{vec} ({P}^{0})\\ \vdots\\ \mathrm{vec} (P^d) \end{bmatrix}={\cal A}'\begin{bmatrix}  \mathrm{vec} ({P}^{0})\\ \vdots\\ \mathrm{vec} (P^d)\end{bmatrix}+\begin{bmatrix} \mathrm{vec} ( Q)\\ \vdots\\ \mathrm{vec} ( Q)\end{bmatrix}. \label{varofric}
\end{align}
According to Lemma \ref{neccon}, matrix  ${\cal A}$ is Schur when system \eqref{ksys} is asymptotically mean-square stable, so is its transpose.  Now it is ready to see that \eqref{varofric} has a unique solution and thereby \eqref{lyapi}-\eqref{lyapd} have a unique solution.

Second, we will show positive definiteness  of the unique solution. Let $V_k$ be as in \eqref{lyafuncan} and $P^i$ admit \eqref{lyapi}-\eqref{lyapd}. 
From \eqref{deltavk}, we can get 
\begin{align}
\sum_{k=j}^{N} (V_{k}- V_{k+1})=V_j-V_{N+1}={\mathsf E}[\sum_{k=j}^{N} x_k'Qx_k].
\end{align}
Take limit on both sides of the above equality with respect to $N\rightarrow \infty$.
Since system \eqref{ksys} is asymptotically mean-square stable, $V_{N+1}\rightarrow 0$ as  $N\rightarrow \infty$. Consequently, 
\begin{align}
V_{j}={\mathsf E}[\sum_{k=j}^{\infty} x_k'Qx_k]\label{keysta0}
\end{align}
for any $j \ge d$.
 Let the initial state at time $j$ be $x_j=c$ and $x_j=w_s c, s=j-1, \cdots, j-d$, where $c \neq 0$ is an  arbitrary constant vector. Direct calculation gives $V_j=c'P^dc$ and $V_j= c'P^{i-1}c, i=1,\cdots, d$, respectively. From $Q\ge 0$, there also has that $V_j={\mathsf E}[\sum_{k=j}^{\infty} x_k'Qx_k] \ge 0$. Consequently, the positive semi-definiteness of $P^i \ge 0$ follows, where $i=0,\cdots, d$.  If $P^i$, $i=0,\cdots, d$, is not positive definite and $c\neq 0$ belongs to the kernal space of $P^i$ (i.e., $P^ic=0$), then for $\forall  j \le k\le N$ and any $N\ge j$, $y_k=Q^{1/2}x_k=0$ almost surely, which contradicts the exactly observability of system \eqref{ksys} with output equation $y_k=Q^{1/2}x_k$. Therefore,  $P^i>0, i=0,\cdots, d$. The proof is now completed. 
\end{proof}

\begin{remark}
From the above proof,   the exact observability serves  to guarantee that the positive semi-definite solution of the Lyapunov equations \eqref{lyapi}-\eqref{lyapd} is positive definite when $Q$ is positive semi-definite. In other words, if $Q>0$, the Lyapunov equations \eqref{lyapi}-\eqref{lyapd} still have a positive definite solution even though not assume the exact observability of $(A-BK,\bar A-\bar B K |Q^{1/2})$.
\end{remark}

It is noticed that the coupled Lyapunov-type equations \eqref{lyapi}-\eqref{lyapd} including  $d+1$ matrix equations actually can be reduced to a pair of coupled Lyapunov-type equations.
\begin{remark}\label{rsufcon}
For given $K$ and $Q$, the following Lyapunov equations  
\begin{small}
\begin{align}
P^{0}
=&{A^{(d)}}'P^dA^{(d)}+\sum_{k=0}^{d-1}{A^{(k)}}'\bar A'P^{0}\bar AA^{(k)}+\sum_{k=0}^{d-1}{A^{(k)}}'QA^{(k)},\label{rp0}\\
P^d=&(A-BK)'P^d(A-BK)+(\bar A-\bar BK)'P^0(\bar A-\bar BK)+Q\label{rpd}
\end{align}
\end{small}
have a solution $(P^0, P^d)$ if and only if \eqref{lyapi}-\eqref{lyapd} have a solution $P^i, i=0,\cdots, d$.
\end{remark}

The conclusion  in  this remark can be obtained by straightforward algebraic manipulation. If  \eqref{lyapi}-\eqref{lyapd} have a solution.
From \eqref{lyapi}, one can deduce
\begin{align}
&P^{i-1}=A'P^{i}A+\bar A'P^{0}\bar A+Q,\no\\
&={A^{(2)}}'P^{i+1}A^{(2)}+A'\bar A'P^{0}\bar AA+A'QA+\bar A'P^{0}\bar A+Q,\no\\
&={A^{(d-i+1)}}'P^dA^{(d-i+1)}\no\\
&+\sum_{k=0}^{d-i}A^{(k)'}\bar A'P^{0}\bar AA^{(k)}+\sum_{k=0}^{d-i}{A^{(k)}}'QA^{(k)}.\end{align}
Let $i=1$, then \eqref{rp0} appears. 
Plugging the above equality with $i=1$ into \eqref{lyapd} results in \eqref{rpd}. The sufficiency part is now evident.

If \eqref{rp0}-\eqref{rpd} has a solution $(P^0, P^d)$, then we can define $P^{i-1}$ by $P^{i-1}={A^{(d-i+1)'}}P^dA^{(d-i+1)}+\sum_{k=0}^{d-i}A^{(k)'}$ $\bar A'P^{0}\bar AA^{(k)}+\sum_{k=0}^{d-i}A^{(k)'}QA^{(k)}$ for $i=1,\cdots, d$. Obviously, such $P^i, i=0,\cdots, d$, admits Lyapunov-type equations \eqref{lyapi}-\eqref{lyapd}.


\section{Iterative optimal control design}
In this section, with the aid of stabilizing condition obtained in the proceeding section, we will propose two control designs for minimizing the performance index $ J$ in \eqref{costfunj} of the  multiplicative-noise system \eqref{sys}.

%
%
%
\subsection{Offline and Model-Based Algorithm}
From Lemma \ref{lem1}, it is not easy to get the optimal control by solving Riccati-type equations \eqref{cmp1}-\eqref{cmpi}. For this, we rewrite \eqref{cmp1}-\eqref{cmpi} as Riccati-type equations \eqref{ricpi}-\eqref{ricpd} so as to find the iterative solutions by virtue of Lyapunov-type equations \eqref{lyapi}-\eqref{lyapd} and analyze their convergence via the proposed stabilizing condition in Section 2.
%

The following theorem provides an offline and model-based optimal controller for the LQR $\min_u J$  in \eqref{costfunj} subject to \eqref{sys}. It approximates the solution to the Riccati-type  equations \eqref{ricpi}-\eqref{ricpd} via the solutions of a sequence of Lyapunov-type equations, which is also the theoretical basis of our data-driven algorithm.
 
\begin{theorem}\label{main}
For given $Q \ge 0$, assume $(A, \bar A|Q^{1/2})$ is exactly observable.  Let $K_0$ be stabilizing, and  $P_j^i$, $i=0,\cdots, d$,  the positive definite solution of the Lyapunov-type equations
\begin{align}
&P_j^{i-1}=A'P_j^{i}A+\bar A'P_j^0\bar A+Q, i=1,\cdots, d-1,\label{ipki}\\
&P_j^d=(A-BK_j)'P_j^d(A-BK_j)\no\\&~~~~~~~+(\bar A-\bar BK_j)'P_j^0(\bar A-\bar BK_j)+K_j'RK_j+Q\label{ipkd},
\end{align}
where  $K_j$, $j=1,2,\cdots$, is defined recursively by 
\begin{small}
\begin{align}
K_j=(R+B'P_{j-1}^dB+\bar{B}'P_{j-1}^0\bar{B})^{-1}(B'P_{j-1}^dA+\bar{B}'P_{j-1}^0\bar{A}). \label{ki}
\end{align}
\end{small}
Then, the following properties hold:
\begin{itemize}
\item[1)] system \eqref{sys} can be stabilized by $K_j$;
\item [2)]  $0< P^i_{j+1}\le P^i_{j}$ for $ i=0,\cdots, d$;
\item [3)]$ lim_{j \to \infty} P^i_{j}=P^{i} $ for $i=0,\cdots, d$,  $lim_{j\rightarrow \infty} K_j=K$,  where $P^i$ obeys \eqref{ricpi}-\eqref{ricpd}, and $K$ is as in \eqref{excgain}.
\end{itemize}
\end{theorem}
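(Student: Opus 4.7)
My plan is to prove the theorem by induction on $j$, following a Kleinman/Hewer-type policy iteration adapted to the delayed multiplicative-noise setting; the core algebraic trick is a matrix square completion that recasts the Lyapunov equation at iterate $j$ as a Lyapunov equation for the updated gain $K_{j+1}$ plus an explicit non-negative residual. For the base case, $K_0$ is stabilizing by hypothesis, and since exact observability of $(A,\bar A|Q^{1/2})$ is preserved under state feedback and $Q+K_0'RK_0\ge Q$, Lemma \ref{necsta} (applied with the effective $Q$-term appearing in \eqref{ipkd}) delivers a unique positive definite solution $P_0^i>0$ of \eqref{ipki}-\eqref{ipkd}.

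For the inductive step, assume $K_j$ is stabilizing with $P_j^i>0$. Then $\Upsilon_j:=R+B'P_j^dB+\bar B'P_j^0\bar B>0$, so $K_{j+1}$ in \eqref{ki} is well-defined and $M_j:=B'P_j^dA+\bar B'P_j^0\bar A=\Upsilon_j K_{j+1}$. Substituting the algebraic identity $K_j'\Upsilon_j K_j-M_j'K_j-K_j'M_j=(K_j-K_{j+1})'\Upsilon_j(K_j-K_{j+1})-K_{j+1}'\Upsilon_j K_{j+1}$ into the expanded form of \eqref{ipkd} yields
\begin{align*}
P_j^d=\;&(A-BK_{j+1})'P_j^d(A-BK_{j+1})\\
&+(\bar A-\bar BK_{j+1})'P_j^0(\bar A-\bar BK_{j+1})\\
&+Q+K_{j+1}'RK_{j+1}+(K_j-K_{j+1})'\Upsilon_j(K_j-K_{j+1}).
\end{align*}
Because \eqref{ipki} does not involve the gain at all, this shows that $\{P_j^i>0\}_{i=0}^d$ is a positive definite solution of the Lyapunov-type equations of Lemma \ref{sufcon} with $K=K_{j+1}$ and an effective ``$Q$-term'' that dominates $Q$ and is positive semi-definite. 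Lemma \ref{sufcon} then yields mean-square stability of \eqref{ksys} under $K_{j+1}$, proving claim 1) for $j+1$, and Lemma \ref{necsta} in turn supplies the unique positive definite $P_{j+1}^i>0$ of \eqref{ipki}-\eqref{ipkd} at iterate $j+1$. Subtracting the $(j{+}1)$-th Lyapunov equations from the identity above, $\Delta_j^i:=P_j^i-P_{j+1}^i$ solves a homogeneous Lyapunov-type system with stabilizing gain $K_{j+1}$, trivial forcing for $i<d$, and non-negative forcing $(K_j-K_{j+1})'\Upsilon_j(K_j-K_{j+1})$ for $i=d$; the Lyapunov-functional (cost-representation) argument used inside the proof of Lemma \ref{necsta} then forces $\Delta_j^i\ge 0$, which is claim 2).

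For claim 3), the monotone decreasing sequence $\{P_j^i\}$ is bounded below by $0$, hence converges to some $P_\infty^i\ge 0$. Continuity of matrix inversion, valid because $\Upsilon_j\ge R>0$ uniformly in $j$, gives $K_j\to K_\infty$ with $K_\infty$ matching \eqref{excgain} at $P_\infty$, and passing to the limit in \eqref{ipki}-\eqref{ipkd} recovers \eqref{ricpi}-\eqref{ricpd}; the uniqueness assertion of Lemma \ref{lqrsol} then forces $P_\infty^i=P^i$ and $K_\infty=K$. The only non-routine step is the square-completion identity displayed above: unlike the delay-free case, the standard term $B'P_j^dA$ and the multiplicative-noise term $\bar B'P_j^0\bar A$ must simultaneously close into the single $M_j=\Upsilon_j K_{j+1}$, so the coupling between $P_j^0$ and $P_j^d$ through the $\bar A,\bar B$ blocks has to be tracked carefully; once the identity is in hand, the remainder of the proof reduces to the classical Kleinman argument routed through Lemmas \ref{sufcon} and \ref{necsta}.
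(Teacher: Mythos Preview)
Your proposal is correct and mirrors the paper's own proof essentially step for step: the same square-completion identity (the paper calls the matrix $N_{j+1}$ where you write $\Upsilon_j$), the same appeal to Lemma~\ref{sufcon} for stability of $K_{j+1}$, Lemma~\ref{necsta} for existence and positivity of $P_{j+1}^i$, the same difference system $\Delta_j^i$ to get monotonicity, and the same monotone-bounded argument for convergence. The only minor addition in your write-up is the explicit invocation of $\Upsilon_j\ge R>0$ and of the uniqueness clause in Lemma~\ref{lqrsol} at the end, which the paper leaves implicit.
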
 

\begin{proof}
It should be noticed a fact that if $(A, \bar A|Q^{1/2})$ is exactly observable, then for any matrices $K$, $R>0$ and $Q_1\ge 0$, $(A-BK, \bar A-\bar BK|(Q+K'RK+Q_1)^{1/2})$ is also exactly observable \cite{huang2006wcica}.  With this fact,
 Lemma \ref{sufcon} and \ref{necsta} can be used to show that system \eqref{sys} can be stabilized by $-K_jx_k|_{k-d-1}$ and the Lyapunov-type equations \eqref{ipki}-\eqref{ipkd} have  a unique positive definite solution, respectively.  What follows is the proof in details.  

We at first rewrite equation \eqref{ipkd} as 
\begin{small}
\begin{align}
P_j^d
=&(A-BK_{j+1})'P_j^d(A-BK_{j+1})\no\\&+(\bar A-\bar BK_{j+1})'P_j^0(\bar A-\bar BK_{j+1})+K_j'RK_j+Q\no\\
&+K_{j+1}'(AP_j^dB+\bar AP_j^0\bar B)+(AP_j^dB+\bar AP_j^0\bar B)'K_{j+1}\no\\&-K_{j+1}'(N_{j+1}-R)K_{j+1}-K_{j}'(AP_j^dB+\bar AP_j^0\bar B)\no\\
&-(AP_j^dB+\bar AP_j^0\bar B)'K_{j}+K_{j}'(N_{j+1}-R)K_{j}\no\\
=&(A-BK_{j+1})'P_j^d(A-BK_{j+1})\no\\&+(\bar A-\bar BK_{j+1})'P_j^0(\bar A-\bar BK_{j+1})+Q\no\\
&+2K_{j+1}'N_{j+1}K_{j+1}-K_{j+1}'(N_{j+1}-R)K_{j+1}\no\\
&-K_{j}'N_{j+1}K_{j+1}-K_{j+1}'N_{j+1}K_{j}+K_{j}'N_{j+1}K_{j}\no\\
=&(A-BK_{j+1})'P_j^d(A-BK_{j+1})\no\\&+(\bar A-\bar BK_{j+1})'P_j^0(\bar A-\bar BK_{j+1})+Q\no\\
&+(K_{j+1}-K_j)'N_{j+1}(K_{j+1}-K_j)+K_{j+1}'RK_{j+1}, \label{ripkd}
\end{align}
\end{small}
where $N_{j+1}=R+B'P_j^dB+\bar B'P_j^0\bar B$.

Let $\delta P_j^i=P_j^i-P_{j+1}^i$ for $i=0,\cdots, d$. By associating \eqref{ripkd} with Lyapunov-type equations \eqref{ipki}-\eqref{ipkd},  it can be obtained that
\begin{align}
&\delta P_j^{i-1}=A'\delta P_j^{i}A+\bar A'\delta P_j^0\bar A+Q, i=1,\cdots, d-1,\label{deltaripki}\\
&\delta P_j^d=(A-BK_{j+1})'\delta P_j^d(A-BK_{j+1})\no\\&~~~~~~~~+(\bar A-\bar BK_{j+1})'\delta P_j^0(\bar A-\bar BK_{j+1})\no\\&~~~~~~~~+(K_{j+1}-K_j)'N_{k+1}(K_{j+1}-K_j).\label{deltaripkd}
\end{align}

Subsequently, according to \eqref{ripkd} and \eqref{deltaripki}-\eqref{deltaripkd}, we shall show that $1)-2)$ hold.  

 In the case of $j=0$, since $K_0$ is stabilizing and $(A-BK_0, \bar A-\bar BK_0|(Q+K_0'RK_0)^{1/2})$ is exactly observable,  it follows from Lemma \ref{necsta} that Lyapunov-type equations \eqref{ipki}-\eqref{ipkd} have a unique positive definite solution $P^i_0, i=0,\cdots, d$. Further,  one can obtain that $(K_{1}-K_0)'N_{1}(K_{1}-K_0)\ge 0$ and $(A-BK_0, \bar A-\bar BK_0|(Q+(K_{1}-K_0)'N_{1}(K_{1}-K_0)+K_1'RK_1)^{1/2})$ is exactly observable.  According to Lyapunov-type equations \eqref{ipki} and \eqref{ripkd}(for $j=0$) and Lemma \ref{sufcon}, it is inferred that $K_1$ is stabilizing. Recall the exact observability of $(A-BK_1, \bar A-\bar BK_1|(Q+K_1'RK_1)^{1/2})$.  From Lemma \ref{necsta}, the Lyapunov-type equations \eqref{ipki}-\eqref{ipkd} with $j=1$ have a unique positive definite solution $P^i_1,i=0,\cdots, d$. 
Observe  the Lyapunov-type equations \eqref{deltaripki}-\eqref{deltaripkd} with $j=0$,  where $K_1$ is stabilizing and $ (K_{j+1}-K_j)'N_{j+1}(K_{j+1}-K_j)\ge 0$. Without the exact observability,  from the proof of  Lemma \ref{necsta}, it can be deduced that \eqref{deltaripki}-\eqref{deltaripkd} wtih $j=0$ have a positive semi-definite solution $\delta P_0^i, i=0, \cdots, d$, i.e., $P^i_0\ge P^i_1$, $i=0,\cdots, d$. 

Repeat the above process for $j\ge 1$. It is evident that the conclusions  $1)-2)$ in this theorem hold.

Finally,  the convergence of $P_j^i$ with respect to $j$ is  to be shown.  ii) implies that  for any $i=0,\cdots, d$, the matrix sequence $\{ P^i_j\}$ is bounded from below and decreases monotonically with respect to $j$. Thus,  for any $i=0,\cdots, d$, $\{ P^i_j\}$ is convergent as $j\rightarrow \infty$. Denote $\lim_{j\rightarrow \infty} P^i_j$ as $P^i$ for $i=0,\cdots, d$. Taking the limit with respect to $j$ on the both sides of \eqref{ipki}-\eqref{ki}, we obtain that  $P^i$ obeys the Riccati-type equations \eqref{ricpi}-\eqref{ricpd}, where 
 $\lim_{j\rightarrow \infty} K_j =K$. Moreover, for any $i=0,\cdots, d$,  the  positive definiteness of $P_j^i$ means $P^i>0$.

Until now, the proof of Theorem \ref{main} is completed.

\end{proof}

\begin{remark}
 \cite[Th. 1]{1971hewer} provides a numerical method for standard Riccati equation by iteratively solving a sequence of Lyapunov equations. Theorem 1 is a counterpart of \cite[Th. 1]{1971hewer} because it iteratively solves the variant of Riccati-ZXL equations, which determines the optimal solution of the LQR problem for multiplicative-noise systems with input delay. 
\end{remark}

\subsection{Online Algorithm for Multiplicative-Noise LQR with Input Delay and Partial Unknown Dynamics}

We turn to find an online algorithm for solving $\min_u J$  in \eqref{costfunj} subject to \eqref{sys} with unknown system dynamics $\bar A$ and $\bar B$ and exactly observable $(A, \bar A|Q^{1/2})$.


For any $k\ge d$,  define $\bar V_{k}$ as
\begin{align}
\bar V_k={\mathsf E}[||x_{k}|_{k-d-1}||_{P_j^d}+\sum_{i=1}^d||x_{k}|_{k-i-1}^{k-i}||_{P_j^{i-1}}], \label{rvaluefunc}
\end{align}
where $P_j^i$ for $i=0,\cdots, d+1$ admits \eqref{ipki}-\eqref{ipkd} with $k=j$. 

Rewrite system \eqref{sys} as 
\begin{align}
x_{k+1}=&A_kx_k|_{k-d-1}^{k-1}+(A_k-BK_j)x_k|_{k-d-1}\no
\\&+B_k(u_{k-d}+K_jx_k|_{k-d-1}), \label{rksys}
\end{align}
where $K_j$ is as in \eqref{ki}.

It follows from \eqref{rvaluefunc} and \eqref{rksys} that
\begin{align}
&\bar V_k-\bar V_{k+1}\no\\
=&{\mathsf E}[\sum_{i=1}^d||x_{k}|_{k-i-1}^{k-i}||_{P_j^{i-1}-A'P_j^{i}A-\bar A'P_j^0\bar A}\no\\
&-||(A-BK_j)x_k|_{k-d-1}+B(u_{k-d}+K_jx_k|_{k-d-1})||_{P_j^d}\no\\
&-||(\bar A-\bar BK_j)x_k|_{k-d-1}+\bar B(u_{k-d}+K_jx_k|_{k-d-1})||_{P_j^0}]\no\\
=&{\mathsf E}[x_k|_{k-d-1}'K_j'RK_jx_k|_{k-d-1}+x_k'Qx_k\no\\
&-||u_{k-d}||_{B'P_j^dB+\bar B'P_j^0\bar B}+||K_jx_k|_{k-d-1}||_{B'P_j^dB+\bar B'P_j^0\bar B}\no\\
&-2(x_k|_{k-d-1})'(A'P_j^dB+\bar A'P_j^0\bar B)(u_{k-d}+K_jx_k|_{k-d-1})],\label{expeqn}
\end{align}
where the first and second equalities have used \eqref{rksys} and Lyapunov-type equations \eqref{ipki}-\eqref{ipkd},  respectively.

Next, it will be shown that for a given  stabilizing $K_j$, $(P^0_j, \cdots, P_j^d, K_{j+1})$ 
satisfying \eqref{ipki}-\eqref{ki} can be uniquely determined without the knowledge of $\bar A$ and $\bar B$, under certain rank condition. 
 
In fact, \eqref{expeqn} implies the linear equation
\begin{eqnarray}
&&\Theta_j \left[\begin{array}{c}\underline{\mathrm{vec}}(P^0_j) \\ \vdots\\ \underline{\mathrm{vec}}(P^d_j)\\ \mathrm{vec} (B'P_j^dA) \\ \underline{\mathrm{vec}}(B'P_j^dB+\bar B'P_j^0\bar B)\end{array}\right]
=\Gamma_j,\label{veceqn}\\
&&\Theta_j=\begin{bmatrix} z_{d,j}'&z_{d+1,j}'&\cdots& z_{d+l,j}' \end{bmatrix}',\\
&&\Gamma_j=\begin{bmatrix} r_{d,j}&r_{d+1,j}&\cdots& r_{d+l,j} \end{bmatrix}' \label{gammak}
\end{eqnarray}
with
\begin{small}
\begin{eqnarray}
&&z_{k,j}=[\tilde {xx}_{1,j}', \cdots, \tilde {xx}_{d,j}',\hat {xx}_{j}', ux_j^{'}, uu_{j}' ],\\
&&uu_j=\underline{\mathrm{vec}}\left(\mathrm{mat}(u_{k-d,j})-\mathrm{mat}(K_jx_{k,j}|_{k-d-1})\right),\\
&&ux_j=-2\mathrm{vec}(u_{k-d,j} (K_jx_{k,j}|_{k-d-1})'),\\
&& \hat {xx}_j=\underline{\mathrm{vec}}(\mathrm{mat}(x_{k,j}|_{k-d-1})-\mathrm{mat}(x_{k+1,j}|_{k-d})), \\
&&\tilde {xx}_{i,j}=\underline{\mathrm{vec}}(\mathrm{mat}(x_{k,j}|^{k-i}_{k-i-1})-\mathrm{mat}(x_{k+1,j}|^{k+1-i}_{k-i})),\\
&&r_{k,j}=x_{k,j}|_{k-d-1}'K_j'RK_jx_{k,j}|_{k-d-1}+x_{k,j}'Qx_{k,j}.
\end{eqnarray}
\end{small}

In the above, the subscript $j$  indicates that the data is generated by system \eqref{sys} under the controller $-K_jx_k|_{k-d-1}+e_k$,  and
 $x_{k,j}|_{k-i}$ can be represented as 
\begin{eqnarray}
&&x_{k,j}|_{k-i}={\mathcal A}_{i-1} {\mathcal X}_{k-i,j},\\
&&{\mathcal A}_i=[A^{(i)}, A^{(i-1)}B, \cdots, B], \\
&&{\mathcal X}_{k-i,j}=[x_{k-i,j}', u_{k-i-d,j}', \cdots, u_{k-1-d,j}']'.
\end{eqnarray}
It is evident that  ${\mathcal X}_{k-i,j}$ for $i=1,\cdots, d+1$ can be measured indirectly by the history data $x_{k-d,j}, u_{k-1-d,j}$, $\cdots$, $u_{k-2d,j}$ when $(A,B)$ is known but $(\bar A, \bar B)$ unknwon. 

If \eqref{veceqn} has a unique solution of $B'P_j^dB+\bar B'P_j^0\bar B$, $B'P_j^dA+\bar B'P_j^0\bar A$, and $P^i_j$ for $i=0,\cdots, d$,  then $K_{j+1}$ can
be  obtained from
\begin{eqnarray}
K_{j+1}=(R+B'P_j^dB+\bar B'P_j^0\bar B)^{-1}(B'P_j^dA+\bar B'P_j^0\bar A).\label{kk+1}
\end{eqnarray}

Now, we give the  RL-based algorithm 1.

\begin{algorithm}
\caption{RL-based optimal controller design}
\begin{itemize}
\item[1)] Set $j= 0$ and select $K_0$ such that $x_{k+1}=A_kx_k-B_kK_0x_{k-d-1}$ is asymptotically stable in the mean-square sense;
\item[2)]  Apply the control input $u_k = -K_jx_k|_{k-d-1} + e_k$ to system \eqref{sys} on the
time interval $[k_1, k_2]$, and compute $\Theta_j$ and $\Gamma_j$;
\item[3)] Solve \eqref{veceqn} via batch least squares and \eqref{kk+1}. If $|K_{j+1}-K_j| < \epsilon$, where $\epsilon>0$ is a sufficiently small threshold, go
to the next step. Otherwise, set $j +1 \rightarrow j$, and jump 2);
\item[4)] Use $K_j$ as an approximation to the exact control gain $K$ as in \eqref{excgain}.
\end{itemize}
\end{algorithm}
Algorithm 1 is implemented  online  in real time as the data $(x_{k-d}, u_{k-d-1}, \cdots, u_{u-2d})$ is measured  at each time step. Notice that 
$B'P_j^d B+\bar B'P_j^0 \bar B,   P^i_j$ and $B' P_j^d A+\bar B' P_j^0 \bar A$ are $m \times m$, $n \times n$ and $m \times n$ unknown matrices,
 respectively. Particularly, the first two matrices are symmetric. There are actually $l_1\dot= n(n+1)(d+1)/2+m(m+1)/2+mn$ independent elements to be determined in equation \eqref{veceqn}. Therefore,   $l\ge l_1$  sets of data are required before \eqref{veceqn} can be solved.   Since \eqref{veceqn} stems from \eqref{expeqn}, where the equality holds when taking mathematical expectation, we approximate the expectations by numerical average.

\begin{remark} 
Provided that the rank of matrix $\Theta_j$ is kept equal to $l_1$ in the learning process of Algorithm 1, then equation \eqref{veceqn} always has a unique solution. Due to that $P_j^i$ of this solution satisfies the Lyapubov-type equations \eqref{ipki}-\eqref{ipkd} and $K_{j+1}$ is generated by \eqref{kk+1},  according to Theorem 1, the sequences $\{P_j^i\}_{j=0}^{\infty}$ and 
$\{K_j\}_{j=0}^{\infty}$ from solving equation \eqref{veceqn}  converge to the solution  $P^i$ of the Riccati-type equations \eqref{ricpi}-\eqref{ricpd} and the optimal feedback gain $K$ in \eqref{excgain}, respectively.   
\end{remark}

\begin{remark} Denote $l_2\dot=(dm+n)(dm+n+1)/2+m(m+1)/2+m(dm+n)$. 
$l_1$ independent elements are required to be determined in Algorithm 1, while $l_2$  independent elements need to be learned if the Q-learning algorithm is implemented after state augmentation. Given that $l_2-l_1={\mathcal O}(d^2m^2)$,  the computation complexity can be remarkably reduced by using Algoirthm 1 when delay $d$ or  the dimension of the input $m$ are very large. 
\end{remark}


\section{Numerical example}
In this section, a numerical example is provided to evaluate our learning algorithm.
  
Consider system \eqref{sys} and performance index \eqref{costfunj} with parameters
\begin{align}
A&=\begin{bmatrix} 1.1&-0.3\\1 &0\end{bmatrix}, \bar A=\begin{bmatrix} 0 &0\\-0.18 &0\end{bmatrix}, B=\begin{bmatrix} 1 \\0\end{bmatrix},\no\\
 \bar B&=\begin{bmatrix} -0.1 \\0.08\end{bmatrix},  Q=\begin{bmatrix}1 &0.5\\0.5 &1\end{bmatrix}, R=1, d=2.
\end{align}
From \eqref{excgain}, the exact optimal control gain of  the LQR problem is $K^*=[0.8558~-0.2243]$.

We select $K_0=[0~ 0]$ because system \eqref{sys} with  $u_{k-d}=0$ is asymptotically mean-square stable.
In the simulation, the initial data are $x_0=[0.4~0.6]'$, $u_{-2}=-0.2$ and $u_{-1}=-0.45$. From $k=0$ to $k=38$, $400$ scalar Gaussian white noise sequences with zero mean and variance $2.5$ are selected as the exploration noises and used as the system input.

Collect $400$ sets of samples of state and input information over $[0, 40]$ and take their own average. The policy is iterated from $41$, and convergence is attained after $10$ iterations, when the stopping criterion $||K_k-K^*||\le 10^{-4}$ is satisfied. The formulated controller  is used as the actual control input to the system starting from $k=39$ to the end of the simulation. A sample path of the state are ploted in Fig. \ref{trajectories}. 
 
Algorithm 1 gives the control gain matrix $K_9=[0.8626~-0.2151]$. 
 As shown in Fig.\ref{gains},  the convergence of $K_k$ to $K^*$  is illustrated in Fig. \ref{gains}. 

\begin{figure}[h]
\centering
{\includegraphics[scale=0.5]{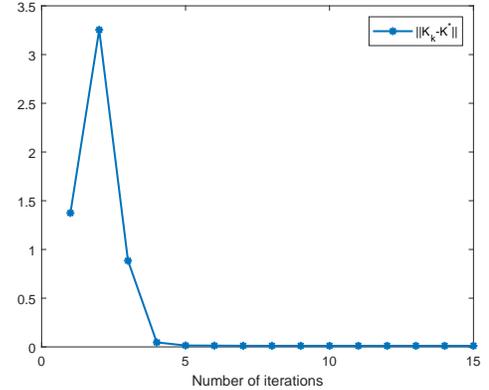}}
\caption{Convergence of $K_k$ to the optimal value of  $K^*$ }\label{gains}
\end{figure}
\begin{figure}[h]
\centering
{\includegraphics[scale=0.5]{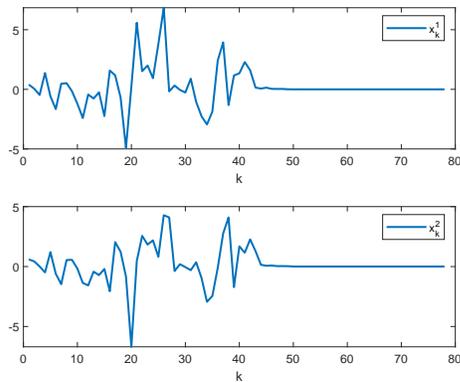}}
\caption{A sample path of the state  during the simulation}\label{trajectories}
\end{figure}

\section{Conclusion}
This paper has obtained the necessary and sufficient stabilizing condition of the predictor-feedback control, which generalizes the classical Lyapunov theory. By applying the condition, two optimal control algorithms for the LQR for multiplicative-noise system with input delay have been proposed. One is model-based and offline, and its convergence and stability analysis have been proved. Another is data-based in the case of the partially unknown
dynamics, and its effectiveness  has also been illustrated by a numerical
example.

\bibliographystyle {plain} 
\bibliography {reference}

\end{document}